\newtheorem{thm}{Theorem}[section]
\newtheorem{corollary}[thm]{Corollary}
\newtheorem{lemma}[thm]{Lemma}
\newtheorem{proposition}[thm]{Proposition}
\theoremstyle{definition}
\newtheorem{definition}[thm]{Definition}
\newtheorem{remark}[thm]{Remark}
\newtheorem{example}[thm]{Example}
\newcommand{\Z}{\operatorname{\mathbb{Z}}}
\newcommand{\Q}{\operatorname{\mathbb{Q}}}
\newcommand{\m}{\operatorname{\mathfrak{m}}}
\newcommand{\e}{\operatorname{Ext}}
\newcommand{\Ann}{\operatorname{Ann}}
\begin{document}
\baselineskip=17pt

\title[Matlis' semi-regularity in trivial ring extensions]{Matlis' semi-regularity in trivial ring extensions issued from integral domains}

\author[K. Adarbeh]{Khalid Adarbeh}
\address{Department of Mathematics and Statistics\\
King Fahd University of Petroleum and Minerals\\
Dhahran 31261, Saudi Arabia}
\email{khalidwa@kfupm.edu.sa}

\author[S. Kabbaj]{Salah Kabbaj $^{(\star)}$}
\address{Department of Mathematics and Statistics\\
King Fahd University of Petroleum and Minerals\\
Dhahran 31261, Saudi Arabia}
\email{kabbaj@kfupm.edu.sa}

\thanks{$^{(\star)}$ Corresponding author}

\date{}

\begin{abstract}
This paper contributes to the study of homological aspects of trivial ring extensions  (also called Nagata idealizations). Namely, we investigate the transfer of the notion of (Matlis') semi-regular ring (also known as IF-ring) along with related concepts, such as coherence, in trivial ring extensions issued from integral domains. All along the paper, we put the new results in use to enrich the literature with new families of examples subject to semi-regularity.
\end{abstract}

\subjclass[2010]{Primary 13C10, 13C11, 13E05, 13F05, 13H10; Secondary 16A30,16A50,16A52}

\keywords{Trivial ring extension, idealization, semi-regular ring, IF ring,  coherent ring, quasi-Frobenius ring, von Neumann regular ring}

\maketitle

\section{Introduction}

\noindent Throughout, all rings considered are commutative with identity and all modules are unital. A ring $R$ is coherent if every finitely generated ideal of $R$ is finitely presented. The class of coherent rings includes strictly the classes of Noetherian rings, von Neumann regular rings (i.e., every module is flat), valuation rings, and semi-hereditary rings (i.e., every finitely generated ideal is projective). During the past three decades, the concept of coherence developed towards a full-fledged topic in commutative algebra under the influence of homology; and several notions grew out of coherence (e.g., finite conductor property, quasi-coherence, $v$-coherence, and $n$-coherence). For more details on coherence see please \cite{G1,G2} and for coherent-like properties see, for instance, \cite{KM1,KM2}.

In 1982, Matlis proved that a ring $R$ is coherent if and only if $\hom_{R}(M,N)$ is flat for any injective $R$-modules $M$ and $N$ \cite[Theorem 1]{M82}. In 1985, he defined a ring $R$ to be semi-coherent if $\hom_{R}(M,N)$ is a submodule of a flat $R$-module for any injective $R$-modules $M$ and $N$. Then, inspired by this definition and von Neumann regularity, he defined a ring to be semi-regular if any module can be embedded in a flat module (or, equivalently; if every injective module is flat) \cite{M85}. He then proved that semi-regularity is a local property in the class of coherent rings \cite[Proposition 2.3]{M85}.
Moreover, he proved that in the class of reduced rings, von Neumann regularity collapses to
semi-regularity \cite[Proposition 2.7]{M85}; and under Noetherian assumption, semi-regularity equals the self-injective property; i.e., $R$ is quasi-Frobenius if and only if $R$ is semi-regular and Noetherian \cite[Proposition 3.4]{M85}. Beyond Noetherian settings, examples of semi-regular rings arise as factor rings of Pr\"ufer domains over nonzero finitely generated ideals \cite[Proposition 5.3]{M85}. It is worth noting, at this point, that the notion of semi-regular ring was briefly mentioned by Sabbagh (1971) in \cite[Section 2]{Sabb} and studied in non-commutative settings by Jain (1973) in \cite{Jain}, Colby (1975) in \cite{Col}, and Facchini \& Faith (1995) in \cite{FF}, among others, where it was always termed as IF ring. Also, it was extensively studied -under IF terminology- in (commutative) valuation settings by Couchot in \cite{Cou2003,Cou2009,Cou2012}. Finally, recall that an $R$-module $E$ is fp-injective (also called absolutely pure) if $\e^{1}_{R}(M,E)=0$ for every finitely presented $R$-module $M$ \cite[IX-3]{FuSa}; and $R$ is self fp-injective if it is fp-injective over itself. Also, $R$ is semi-regular if and only if $R$ is self fp-injective and coherent \cite[Theorem 3.10]{Jain} or \cite[Theorem 2]{Col}.

For a ring $A$ and an $A$-module $E$, the trivial ring extension of $A$ by $E$ is the ring $R:= A\ltimes E$ where the underlying group is $A \times E$ and the multiplication is defined by $(a,e)(b,f) = (ab, af+be)$. The ring $R$ is also called the (Nagata) idealization of $E$ over $A$ and is denoted by  $A(+)E$. This construction was first introduced, in 1962, by Nagata \cite{Na} in order to facilitate interaction between rings and their modules and also to provide various families of examples of commutative rings containing zero-divisors. The literature abounds of papers on trivial extensions dealing with the transfer of ring-theoretic notions in various settings of these constructions (see, for instance, \cite{AJK,BKM,DS,Fo,Go,GoMaPh,Gull,Kou,Le,Olb3,Olb4,PR,Po,Rei,Roo,Sal}). For more details on commutative trivial extensions (or idealizations), we refer the reader to Glaz's and Huckaba's respective books \cite{G1,H}, and also D. D. Anderson \& Winders relatively recent and comprehensive survey paper \cite{AW}.

 This paper contributes to the study of homological aspects of trivial ring extensions. Namely, we investigate the transfer of the notion of (Matlis') semi-regular ring (also known as IF-ring) along with related concepts, such as coherence, in trivial ring extensions issued from integral domains. All along the paper, we put the new results in use to enrich the literature with new families of examples subject to semi-regularity.

 For the reader's convenience, Figure~\ref{D1} displays a diagram of implications summarizing the relations among the main notions involved in this work.

\begin{figure}[ht]
\centering
\[\setlength{\unitlength}{.6mm}
\begin{picture}(120,80)(0,-85)
\put(0,-10){\vector(2,-1){40}}
\put(0,-10){\vector(0,-1){20}}
\put(40,-30){\vector(0,-1){20}}
\put(40,-30){\vector(2,-1){40}}
\put(40,-30){\vector(-2,-1){40}}
\put(0,-30){\vector(2,-1){40}}
\put(0,-50){\vector(0,-1){20}}
\put(80,-50){\vector(0,-1){20}}
\put(120,-30){\vector(-2,-1){40}}
\put(120,-30){\vector(0,-1){20}}
\put(120,-50){\vector(-2,-1){40}}
\put(40,-50){\vector(2,-1){40}}
\put(40,-50){\vector(-2,-1){40}}

\put(0,-10){\circle*{1.2}}\put(0,-8){\makebox(0,0)[b]{\scriptsize Semisimple}}
\put(40,-30){\circle*{1.2}}\put(42,-29){\makebox(0,0)[b]{\scriptsize Quasi-Frobenius}}
\put(0,-30){\circle*{1.2}} \put(-2,-30){\makebox(0,0)[r]{\scriptsize von Neumann regular}}
\put(0,-50){\circle*{1.2}} \put(-4,-50){\makebox(0,0)[r]{\scriptsize Self-injective}}
\put(40,-50){\circle*{1.2}}\put(47,-48){\makebox(0,0)[b]{\scriptsize\bf Semi-regular}}
\put(80,-50){\circle*{1.2}} \put(85,-46){\makebox(0,0)[b]{\scriptsize  Noetherian}}
\put(0,-70){\circle*{1.2}} \put(-2,-70){\makebox(0,0)[r]{\scriptsize Self fp-injective}}
\put(80,-70){\circle*{1.2}}\put(78,-70){\makebox(0,0)[r]{\scriptsize  Coherent}}
\put(120,-30){\circle*{1.2}}\put(122,-30){\makebox(0,0)[l]{\scriptsize  Dedekind}}
\put(120,-50){\circle*{1.2}}\put(122,-50){\makebox(0,0)[l]{\scriptsize  Pr\"ufer}}
\end{picture}\]
\caption{\small A ring-theoretic perspective for semi-regularity}\label{D1}
\end{figure}
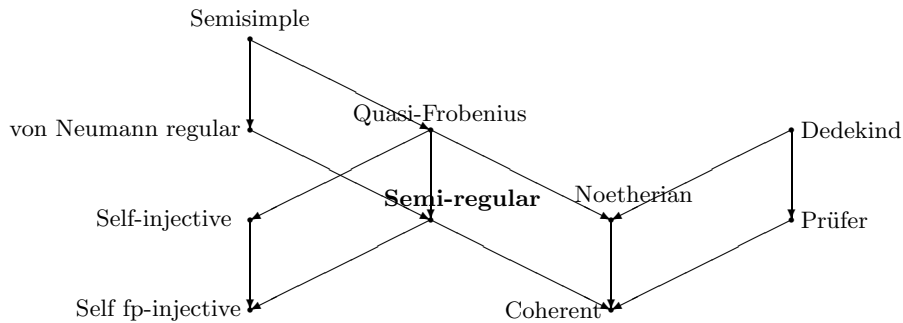

\section{Main result}\label{d}

\noindent We investigate the transfer of semi-regularity to trivial ring extensions issued from domains. We first state some preliminary results which will make up the proof of the main result of this paper (Theorem~\ref{d:thm1}).

Recall that a module over a domain is divisible if each element of the module is divisible by every nonzero element of the domain \cite{Ro}. The first lemma asserts that fp-injectivity and, a fortiori, divisibility of the module $E$ are necessary conditions for the trivial extension $A\ltimes~E$ to inherit semi-regularity.

\begin{lemma}\label{d:lem1}
Let $A$ be a ring, $E$ an $A$-module, and $R:=A\ltimes~E$. If $R$ is  self fp-injective, then $E$ is fp-injective. In particular, if $A$ is a domain and $R$ is semi-regular, then $E$ is divisible.
\end{lemma}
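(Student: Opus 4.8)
The plan is to prove the first assertion through the standard characterization of fp-injectivity by extension of homomorphisms: an $A$-module $E$ is fp-injective precisely when, for every $n$, every finitely generated submodule $K'\subseteq A^{n}$, and every $A$-homomorphism $f\colon K'\to E$, the map $f$ extends to $A^{n}\to E$ (equivalently $\e^{1}_{A}(A^{n}/K',E)=0$, and the quotients $A^{n}/K'$ exhaust the finitely presented $A$-modules as $K'$ ranges over finitely generated submodules). So I would fix such $K'$ and $f$, transport the extension problem to $R$, solve it there by the self fp-injectivity of $R$, and transport the solution back to $A$.

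For the transport I would use the two canonical maps attached to the idealization: the ring section $\sigma\colon A\to R$, $a\mapsto(a,0)$, and the projection $\rho\colon R\to A$, $(a,e)\mapsto a$, which satisfy $\rho\circ\sigma=\mathrm{id}$ and $\ker\rho=0\ltimes E\cong E$. Lifting a finite generating set of $K'$ along $\sigma$ coordinatewise produces a finitely generated $R$-submodule $K\subseteq R^{n}$ with $\rho(K)=K'$. Composing the restriction $\rho|_{K}\colon K\to K'$ with $f$ and with the inclusion $E\cong 0\ltimes E\hookrightarrow R$ yields an $R$-homomorphism $g\colon K\to R$ whose image lies in $0\ltimes E$. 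Since $K$ is finitely generated, $R^{n}/K$ is finitely presented over $R$, so $\e^{1}_{R}(R^{n}/K,R)=0$ and $g$ extends to an $R$-homomorphism $G\colon R^{n}\to R$. Writing $G(\epsilon_{j})=(a_{j},e_{j})$ on the standard basis, I would define $F\colon A^{n}\to E$ by $F(c)=\sum_{j}c_{j}e_{j}$ and check that $F$ extends $f$: evaluating $G$ on the lift $\sigma(u_i')$ of a generator $u_i'=(\alpha_{i1},\dots,\alpha_{in})$ of $K'$ and comparing $E$-components gives $\sum_{j}\alpha_{ij}e_{j}=f(u_i')=F(u_i')$, which is exactly the required identity. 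This yields the desired extension of $f$, and hence the fp-injectivity of $E$; note that no domain hypothesis enters here.

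The step I expect to be the main obstacle is precisely this passage to $R$: an $A$-module viewed over $R$ need not be finitely presented over $R$ (indeed $A=R/(0\ltimes E)$ is finitely presented over $R$ only when $E$ is finitely generated over $A$), so one cannot simply feed $A$-modules into the self fp-injectivity of $R$ along a change-of-rings sequence. The role of lifting through the ring section $\sigma$ is twofold: it keeps $K$ finitely generated over $R$, so that $R^{n}/K$ is genuinely finitely presented and $\e^{1}_{R}(\,\cdot\,,R)$ vanishes, and it forces the $E$-parts of the chosen lifts to vanish, which is what makes the recovered map $F$ agree with $f$ rather than differ from it by a spurious $E$-valued term.

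Finally, for the ``in particular'' statement I would first invoke $R$ semi-regular $\Rightarrow$ $R$ self fp-injective (from the recalled characterization semi-regular $=$ coherent $+$ self fp-injective), so that the first part applies and $E$ is fp-injective over $A$. Over a domain, fp-injectivity forces divisibility by a one-line computation: for $0\neq a\in A$ the module $A/aA$ is finitely presented via the free resolution $0\to A\xrightarrow{\,a\,}A\to A/aA\to 0$ (injectivity of multiplication by $a$ uses that $A$ is a domain), whence $\e^{1}_{A}(A/aA,E)\cong E/aE$; fp-injectivity makes this vanish, i.e. $aE=E$ for every nonzero $a$, which is the definition of divisibility.
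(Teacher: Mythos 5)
Your proposal is correct and follows essentially the same route as the paper's own proof: lift the generators of the finitely generated submodule of $A^{n}$ into $R^{n}$ with zero $E$-component (your $K=\sum_i R\,\sigma(u_i')$ is exactly the paper's $N=\sum_i R(m_i,0)$), extend the induced $R$-map $K\to 0\ltimes E\hookrightarrow R$ using self fp-injectivity of $R$, and recover the desired $A$-extension by projecting back onto $E$ (your coordinate formula for $F$ is precisely the paper's composite $\pi\circ\overline{g}\circ i$). The ``in particular'' part is also handled identically, via semi-regular $\Rightarrow$ self fp-injective and fp-injective $\Rightarrow$ divisible.
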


\begin{proof}
Let $M:=\sum_{1\leq i\leq n} Am_{i}$ be a finitely generated submodule of $A^{n}$, for some positive integer $n$, and let $f: M \longrightarrow E$ be an $A$-map. One can identify $R^{n}$ with $A^{n}\ltimes~E^{n}$ as $R$-modules under natural scalar multiplication. Consider the finitely generated submodule of $R^{n}$ given by $N:=\sum_{1\leq i\leq n} R(m_{i},0)$ along with the $R$-maps
$$N \stackrel{p}\twoheadrightarrow M \stackrel{f}\longrightarrow E \stackrel{u}\hookrightarrow R$$
where $p$ is defined by
$$p\big(\sum_{1\leq i\leq n}(a_{i},e_{i})(m_{i},0)\big)=\sum_{1\leq i\leq n} a_{i}m_{i}$$
and $u$ is the canonical embedding. Then, $g:=u\circ f\circ p$ extends to $R^{n}$ via $\overline{g}$, since $R$ is self fp-injective. It follows that $f$ extends to $A^{n}$ via the $A$-map
$$\overline{f} : A^{n} \stackrel{i}\hookrightarrow R^{n} \stackrel{\overline{g}}\longrightarrow R \stackrel{\pi} \twoheadrightarrow E$$
where $i$ is the canonical embedding and $\pi$ is the canonical surjection. Therefore, $E$ is fp-injective \cite[Theorem IX-3.1]{FuSa}. The second statement of the lemma is straightforward since a semi-regular ring is self fp-injective; and an fp-injective module is divisible.
\end{proof}

\begin{remark}\label{d:rem1}
The second statement of the lemma is still valid if $A$ is an arbitrary ring (i.e., possibly with zero-divisors) and divisibility of $E$ is taken over all non zero-divisors of $A$.
\end{remark}

The next lemma shows that divisibility of the module $E$ controls the finitely generated ideals of the trivial extension $R:=A\ltimes~E$.

\begin{lemma}\label{d:lem2}
Let $A$ be a domain, $E$ a divisible $A$-module, and $R:=A\ltimes~E$. Then, for any finitely generated ideal $\mathcal{I}$ of $R$,
either $\mathcal{I}=I\ltimes E$ for some nonzero finitely generated ideal $I$ of $A$ or $\mathcal{I}=0\ltimes E'$ for some finitely generated submodule $E'$  of $E$.
\end{lemma}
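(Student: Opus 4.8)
My plan is to study $\mathcal{I}$ through the canonical ring surjection $\pi\colon R\to A$, $(a,e)\mapsto a$. If $\mathcal{I}$ is generated by $(a_1,e_1),\dots,(a_n,e_n)$, then $\pi(\mathcal{I})$ is exactly the finitely generated ideal $I:=(a_1,\dots,a_n)$ of $A$. Since a typical element of $\mathcal{I}$ is $\sum_k (b_k,f_k)(a_{i_k},e_{i_k})$, whose first coordinate lies in $I$ and whose second coordinate lies in $E$, I get the easy inclusion $\mathcal{I}\subseteq I\ltimes E$ for free. The whole argument then splits according to whether $I=0$ or $I\neq 0$, which is precisely what produces the two alternatives in the statement.

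Suppose first $I=0$. Then every generator has the form $(0,e_i)$, and since $(b,f)(0,e)=(0,be)$ in $R$, the ideal they generate is precisely $0\ltimes E'$ with $E':=\sum_{1\leq i\leq n}Ae_i$, a finitely generated submodule of $E$. This settles the second alternative by a direct computation, with no hypothesis on $E$ needed beyond its module structure.

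The substantive case is $I\neq 0$, where I would prove $\mathcal{I}=I\ltimes E$; in view of the inclusion already noted, it suffices to show $I\ltimes E\subseteq\mathcal{I}$. Fixing $(a,e)\in I\ltimes E$ and writing $a=\sum_i c_ia_i$, I would form $\sum_i (c_i,0)(a_i,e_i)=(a,\sum_i c_ie_i)\in\mathcal{I}$, which reduces the problem to showing that the whole slice $0\ltimes E$ lies in $\mathcal{I}$, since $(a,e)$ and $(a,\sum_i c_ie_i)$ differ by an element of $0\ltimes E$. This is where divisibility enters as the decisive tool: choosing an index $j$ with $a_j\neq 0$ (possible because $I\neq 0$) and using $(0,f)(a_j,e_j)=(0,a_jf)$, I can realize any prescribed $e''\in E$ by solving $a_jf=e''$, which is solvable since $E$ is divisible and $a_j$ is a nonzero element of the domain $A$. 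Hence $0\ltimes E\subseteq\mathcal{I}$, and the reverse inclusion follows.

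The main obstacle is exactly this last step. Without divisibility there is no reason for $\mathcal{I}$ to absorb all of $E$ in its second coordinate, so the clean dichotomy would break down and $\mathcal{I}$ could retain a proper module part even when $I\neq 0$. Divisibility is what converts a single nonzero generator in the $A$-direction into complete control of the $E$-direction, and it is the one place where the hypotheses of the lemma are genuinely used.
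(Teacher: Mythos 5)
Your proof is correct, and it follows the same overall skeleton as the paper's (split on whether the projection of $\mathcal{I}$ to $A$ is zero; note the easy inclusion $\mathcal{I}\subseteq I\ltimes E$; use divisibility for the reverse inclusion), but the execution of the key step is genuinely different and, in fact, cleaner. The paper works generator by generator: after reordering so that $x_i\neq 0$ exactly for $i\leq r$, it reduces to showing $(a_ix_i,e)\in(x_i,e_i)R$ for each $i$, which amounts to solving $e=x_ie'+a_ie_i$ for $e'$, and it does this through a case analysis ($e=0$ or not, $a_ie_i=0$ or not) that applies divisibility separately to several elements. You instead isolate the statement $0\ltimes E\subseteq\mathcal{I}$ as the decisive sub-claim: a single generator $(a_j,e_j)$ with $a_j\neq 0$ already absorbs the whole slice $0\ltimes E$, since $(0,f)(a_j,e_j)=(0,a_jf)$ and divisibility makes $a_j E=E$; then any $(a,e)\in I\ltimes E$ is congruent, modulo $0\ltimes E$, to the explicit combination $\sum_i(c_i,0)(a_i,e_i)\in\mathcal{I}$. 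This invokes divisibility exactly once and avoids the paper's case analysis entirely. One small difference in scope: the paper's proof also verifies the converse-type fact that $I\ltimes E$ (for $I$ nonzero finitely generated) and $0\ltimes E'$ (for $E'$ finitely generated) are themselves finitely generated ideals of $R$ -- e.g., $I\ltimes E=\sum_i (a_i,0)R$ -- which is not required by the statement but is what Remark~\ref{d:rem2} and the converse direction of Proposition~\ref{d:lem3} later rely on; your argument does not address this, nor does it need to for the lemma as stated.
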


\begin{proof}
First, note that if $E'$ is a  finitely generated submodule  of $E$, then $0\ltimes E'$ is a finitely generated ideal of $R$. Also, let  $I:=\sum_{1\leq i\leq n} Aa_{i}$ with $0\neq a_{i}\in A$ for all $i$ and let $e\in E$. Then, by divisibility, $e=a_{1}e'$ for some $e'\in E$ and, hence,   $(0,e)=(a_{1},0)(0,e')$. It follows that
$$I\ltimes E=\sum_{1\leq i\leq n}(a_{i},0)R.$$
That is, $I\ltimes E$ is a finitely generated ideal of $R$.

Next, let $\mathcal{I}=\sum_{1\leq i\leq n}(x_{i},e_{i})R$ with $x_{i}\in A$ and $e_{i}\in E$ for $i=1,\dots,n$. If $x_{i}=0$ for all $i$, then
$$\mathcal{I}=\sum_{1\leq i\leq n} 0\ltimes Ae_{i}=0\ltimes E'$$
with $E':=\sum_{1\leq i\leq n} Ae_{i}$, as desired. Next, assume the $x_{i}$'s are not all null and, mutatis mutandis,  let $r\in\{1,\dots,n\}$ such that $x_{i}\neq 0$ for $i\leq r$ and $x_{i}=0$ for $i\geq r+1$. We claim that $\mathcal{I}=I\ltimes E$ with $I:=\sum_{1\leq i\leq r} Ax_{i}$. Indeed, $\forall\ i\in\{1,\dots,r\}$ and $\forall\ j\in\{r+1,\dots,n\}$, we have
$$(x_{i},e_{i})R\subseteq Ax_{i}\ltimes (Ex_{i}+Ae_{i})\subseteq I\ltimes E,$$
$$(x_{j},e_{j})R=0\ltimes Ae_{j}\subseteq I\ltimes E$$
so that $\mathcal{I}\subseteq I\ltimes E$. For the reverse inclusion, let $z:=(\sum_{1\leq i\leq r}a_{i}x_{i},e)\in I\ltimes E$. We can write
$$z:=(a_{1}x_{1},e)+\sum_{2\leq i\leq r}(a_{i}x_{i},0).$$
So, it suffices to show that $(a_{i}x_{i},e)\in(x_{i},e_{i})R$, for any given $e\in E$ and $i\in\{1,\dots, r\}$. This holds if there is $e'\in E$ such that
$$e=x_{i}e'+a_{i}e_{i}.$$
Indeed, recall at this point that $E$ is divisible and suppose $e=0$. If $a_{i}e_{i}=0$, take $e':=0$; and if $a_{i}e_{i}\neq 0$, then $a_{i}e_{i}=x_{i}e'_{i}$ for some $e'_{i}\in E$ and hence take $e':=-e'_{i}$. Suppose $e\neq 0$ and let $e=x_{i}e"_{i}$ for some $e"_{i}\in E$. If $a_{i}e_{i}=0$, take $e':=e"_{i}$; and if $a_{i}e_{i}\neq 0$, take $e':=e"_{i}-e'_{i}$, proving the claim.
\end{proof}

\begin{remark}\label{d:rem2}
Notice that the converse of the above lemma is always true; namely, if all finitely generated ideals of $R$ have the two aforementioned forms, then $E$ is divisible. For, let $x$ be a nonzero element of $A$. Then, $(x,0)R=xA\ltimes xE$ is a finitely generated ideal of $R$ with $xA\neq 0$, which forces $E=xE$.
\end{remark}

Next, we examine the transfer of coherence in trivial extensions of domains by divisible modules. In this vein, we will use Fuchs-Salce's definition of a coherent module; that is, all its finitely generated submodules are finitely presented \cite[Chapter IV]{FuSa} (i.e., the module itself doesn't have to be finitely generated). In Bourbaki, such a module is called ``pseudo-coherent" \cite{Bou} and Wisbauer called it  ``locally coherent" \cite{Wis}.

We first isolate the simple case when $A$ is trivial. Namely, if  $A:=k$ is a field and $E$ is a $k$-vector space, then a combination of \cite[Theorem 2.6]{KM2} and \cite[Theorem 4.8]{AW} yields: ``\emph{$k\ltimes~E$ is coherent if and only if $k\ltimes~E$ is Noetherian if and only if $\dim_{k}E<\infty$.}" The next result handles the case when $A$ is a non-trivial domain.

\begin{proposition}\label{d:lem3}
Let $A$ be a domain which is not a field, $E$ a divisible $A$-module, and $R:=A\ltimes~E$. Then, $R$ is coherent if and only if  $A$ is coherent, $E$ is torsion coherent, and $\Ann_{E}(x)$ is finitely generated for all $x\in A$.
\end{proposition}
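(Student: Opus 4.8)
The plan is to combine the structural description of the finitely generated ideals of $R$ furnished by Lemma~\ref{d:lem2} with the fact that coherence of $R$ means precisely that every such ideal is finitely presented. By Lemma~\ref{d:lem2}, a nonzero finitely generated ideal $\mathcal I$ is either of \emph{Type~1}, $\mathcal I=I\ltimes E=\sum_{i=1}^{n}(a_i,0)R$ with $I=\sum Aa_i$ and $a_i\neq 0$, or of \emph{Type~2}, $\mathcal I=0\ltimes E'=\sum_{i=1}^{m}(0,e_i)R$ with $E'=\sum Ae_i$ finitely generated in $E$. For each type I would write down the obvious presentation $R^{t}\twoheadrightarrow\mathcal I$ and decide whether its kernel is finitely generated over $R$. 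Two facts will be used repeatedly: (i) base change preserves finite presentation, applied to $R\to A\cong R/(0\ltimes E)$, together with the computations $(I\ltimes E)\otimes_R A\cong I$ and $(0\ltimes E')\otimes_R A\cong E'$ (using $IE=E$ by divisibility and $(0\ltimes E)(0\ltimes E')=0$); and (ii) over a domain, the tensor product of a torsion module and a divisible module vanishes.

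For the direction ``$R$ coherent $\Rightarrow$ conditions'', I would argue as follows. Since every Type-1 ideal $I\ltimes E$ (resp. Type-2 ideal $0\ltimes E'$) is finitely presented over $R$, applying $-\otimes_R A$ and fact (i) shows $I$ (resp. $E'$) is finitely presented over $A$; letting $I$ and $E'$ range over all finitely generated ideals and submodules yields that $A$ is coherent and $E$ is coherent. For the annihilator condition I would look at the principal ideal $(x,0)R$ with $x\neq 0$: as $A$ is a domain, the kernel of $R\xrightarrow{(x,0)}(x,0)R$ is exactly $0\ltimes\Ann_E(x)$, whose finite generation over $R$ is equivalent to that of $\Ann_E(x)$ over $A$. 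Finally, to see $E$ is torsion I would note that a non-torsion $e\in E$ would give $\Ann_R(0,e)=0\ltimes E$, which is finitely generated since $R$ is coherent, forcing $E$ to be finitely generated; but a nonzero finitely generated divisible module over a domain that is not a field is zero (localize at any maximal ideal and apply Nakayama), a contradiction.

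For the converse I would verify that both types of ideals are finitely presented. The recurring phenomenon is that the kernel of the natural presentation contains a large piece of the form $0\ltimes(\text{$E$-valued syzygies})$ which is \emph{not} finitely generated on its own (as $E$ need not be finitely generated), yet is absorbed by the $R$-span of the lifts of the $A$-syzygies. Concretely, for a Type-2 ideal the kernel of $R^{m}\twoheadrightarrow 0\ltimes E'$ is generated by the lifts of generators of $\ker(A^{m}\to E')$ together with $(0\ltimes E)^{m}$, and the lifts already cover $(0\ltimes E)^{m}$ precisely because the induced map $E^{p}\to E^{m}$ on syzygies is surjective, its cokernel being $E'\otimes_A E=0$ by fact (ii); finite generation of $\ker(A^{m}\to E')$ comes from coherence of $E$. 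For a Type-1 ideal, the kernel of $R^{n}\twoheadrightarrow I\ltimes E$ sits in an extension $0\to 0\ltimes S_E\to\ker\to S_A\to 0$ with $S_A=\operatorname{Syz}_A(a_1,\dots,a_n)$ finitely generated by coherence of $A$; modulo the lifts of $S_A$, what remains is a quotient of $0\ltimes\big(S_E/\operatorname{im}(E\text{-syzygy map})\big)=0\ltimes\operatorname{Tor}_1^A(A/I,E)$, so the ideal is finitely presented as soon as $\operatorname{Tor}_1^A(A/I,E)$ is finitely generated over $A$.

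The heart of the matter, and the step I expect to be the main obstacle, is therefore the finite generation of $\operatorname{Tor}_1^A(A/I,E)$ for an arbitrary finitely generated ideal $I$. I would establish it by dévissage on the number of generators: writing $I=I'+(a)$ and using the short exact sequence $0\to A/(I':a)\xrightarrow{a}A/I'\to A/I\to 0$, divisibility gives $A/(I':a)\otimes_A E=E/(I':a)E=0$ (since $I'\subseteq(I':a)$ is nonzero), so the associated long exact $\operatorname{Tor}$-sequence exhibits $\operatorname{Tor}_1^A(A/I,E)$ as a quotient of $\operatorname{Tor}_1^A(A/I',E)$. The induction terminates at the principal case, where the resolution $0\to A\xrightarrow{a}A\to A/(a)\to 0$ identifies $\operatorname{Tor}_1^A(A/(a),E)$ with $\Ann_E(a)$, finitely generated by hypothesis. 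Assembling these reductions shows every finitely generated ideal of $R$ is finitely presented, i.e. $R$ is coherent.
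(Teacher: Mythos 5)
Your proof is correct, and it follows a genuinely different route from the paper's, above all in the converse direction. For the forward implication the paper quotes Glaz --- coherence passes to the retract $A$ by \cite[Theorem 4.1.5]{G1}, and to $E$ by the remark following \cite[Theorem 4.4.4]{G1} --- whereas you get both by base change along $R\to R/(0\ltimes E)\cong A$; and for torsionness the paper runs an explicit minimal-generating-set computation where you invoke Nakayama after localizing: these are mild variants. The substantive divergence is the converse: the paper appeals to the criterion of \cite[Theorem 2.3.2(7)]{G1}, that a ring is coherent if and only if the intersection of any two finitely generated ideals is finitely generated and every element has finitely generated annihilator; combined with Lemma~\ref{d:lem2}, this reduces the whole proof to three one-line intersection computations and the identities $\Ann_{R}(x,e)=0\ltimes\Ann_{E}(x)$ for $x\neq 0$ and $\Ann_{R}(0,e)=\Ann_{A}(e)\ltimes E$. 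You instead verify finite presentation of both types of ideals directly from explicit presentations; the price is your key lemma that $\operatorname{Tor}_{1}^{A}(A/I,E)$ is finitely generated for every nonzero finitely generated ideal $I$, proved by d\'evissage down to the principal case $\operatorname{Tor}_{1}^{A}(A/(a),E)\cong\Ann_{E}(a)$, using that a torsion module tensored with a divisible module vanishes (one small point to make explicit: discard zero generators first, so that each intermediate ideal $I'$ in the induction is nonzero). Both arguments are sound; the paper's route buys brevity, hiding all syzygy work inside the standard coherence criterion, while yours buys self-containedness (only Lemma~\ref{d:lem2} plus standard homological algebra) and some extra insight, since it exhibits $\operatorname{Tor}_{1}^{A}(A/I,E)$ as the exact obstruction, showing precisely where the hypothesis on $\Ann_{E}(x)$ enters (as the base case of the induction) and how divisibility of $E$ kills everything above it.
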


\begin{proof}
Assume $R$ is coherent. Then so are its retract $A$ by \cite[Theorem 4.1.5]{G1} and $E$ by Glaz's remark following \cite[Theorem 4.4.4]{G1} in page 146. Now, assume there is a torsion-free element $e\in E$ and let $0\neq a\in A$. Then
$$\Ann_{R}(0,e)=\Ann_{A}(e)\ltimes E=0\ltimes E$$
is a finitely generated ideal of $R$. So $E$ is a finitely generated $A$-module. Let $e_{1},\dots,e_{n}$ be a minimal generating set for $E$. By divisibility assumption, we obtain $e_{1}=a\sum_{1\leq i\leq n} a_{i}e_{i}$,
for some $a_{1},\dots,a_{n}\in A$. If $1-aa_{1}\neq 0$, then
$$e_{1}=(1-aa_{1})\sum_{1\leq i\leq n} b_{i}e_{i}$$
for some $b_{1},\dots,b_{n}\in A$, forcing
$$e_{1}\in\sum_{2\leq i\leq n} Ae_{i}$$
which is absurd. So, necessarily, $1-aa_{1}=0$. It follows that $A$ is a field, the desired contradiction. Hence, $E$ is a torsion module. Finally, let $0\neq x\in A$. Then, $\Ann_{R}(x,0)=0\ltimes\Ann_{E}(x)$ is finitely generated in $R$. So $\Ann_{E}(x)$ is a finitely generated submodule of $E$.

Conversely, we first show that the intersection of any two finitely generated ideals of $R$ is finitely generated. Let $I_{1}$ and $I_{2}$ be two nonzero finitely generated ideals of $A$ and let  $E_{1}$ and $E_{2}$ be two finitely generated submodules of $E$. Since $A$ is a coherent domain, $I_{1}\cap I_{2}$ is a nonzero finitely generated ideal of $A$. By Lemma~\ref{d:lem2}, $$(I_{1}\ltimes E)\cap (I_{2}\ltimes E)=(I_{1}\cap I_{2})\ltimes E$$
is a finitely generated ideal of $R$. Further, obviously,
$$(I_{1}\ltimes E)\cap (0\ltimes E_{1})=0\ltimes E_{1}$$
is finitely generated. Moreover, since $E$ is coherent, $E_{1}\cap E_{2}$ is a finitely generated submodule of $E$ \cite[(D)--Page 128]{FuSa}. Hence,
$$(0\ltimes E_{1})\cap (0\ltimes E_{2})=0\ltimes (E_{1}\cap E_{2})$$
is a finitely generated ideal of $R$. In view of Lemma~\ref{d:lem2}, we are done. By \cite[Theorem 2.3.2(7)]{G1}, it remains to show that $\Ann_{R}(x,e)$ is finitely generated for any $(x,e)\in R$. Indeed, if $x\neq 0$, then
$$\Ann_{R}(x,e)=0\ltimes\Ann_{E}(x)$$
is finitely generated in $R$ (since by hypothesis $\Ann_{E}(x)$ is finitely generated). Next, assume $x=0$. In view of the exact sequence
$$0\rightarrow\Ann_A(e)\rightarrow A\rightarrow Ae\rightarrow  0,$$ since $E$ is torsion coherent, $\Ann_A(e)$ is a nonzero finitely generated ideal of $A$. By Lemma~\ref{d:lem2}, $$\Ann_{R}(0,e)=\Ann_A(e)\ltimes E$$
is a finitely generated ideal of $R$, completing the proof of the proposition.
\end{proof}

In the above result, the assumption ``$\Ann_{E}(x)$ is finitely generated for all $x\in A$" is not superfluous in presence of the other assumptions, as shown by the next example.
Throughout, for a domain $A$, $Q(A)$ will denote its quotient field.

\begin{example}\label{d:ex1}
Let $A$ be a coherent domain which is not a field (e.g., any non-trivial Pr\"ufer domain) and $E:=\bigoplus_{n\geq0} E_{n}$ with $E_{n}:=Q(A)/A$. Then, $E$ is a divisible coherent $A$-module \cite[(C)--Page 37 \& (B)--Page 128]{FuSa} and, clearly, $E$ is torsion. However, the condition ``$\Ann_{E}(x)$ is finitely generated for all $x\in A$" does not hold. For, let $x$ be any nonzero nonunit element of $A$. Then, one can easily check that
$$\Ann_{E}(x)=\bigoplus_{n\geq0} \overline{(1/x)}$$
which is not finitely generated.
\end{example}

In order to proceed further, we need to extend, to an $A$-module, Matlis' double annihilator condition in a ring $A$; i.e., $\Ann_{A}(\Ann_{A}(I))=I$, for each finitely generated ideal $I$ of $A$ \cite[Section 4, Definition]{M85}.

\begin{definition}\label{d:def1}
Let $A$ be a ring. An $A$-module $E$ is said to satisfy the double annihilator condition (in short, DAC) if the following two assertions hold:
\begin{enumerate}
\item[(DAC1)] $\Ann_{A}(\Ann_{E}(I))=I$, for every finitely generated ideal $I$ of $A$.
\item[(DAC2)]  $\Ann_{E}(\Ann_{A}(E'))=E'$, for every finitely generated submodule $E'$ of $E$.
\end{enumerate}
\end{definition}

Obviously, this definition coincides with Matlis' double annihilator condition when $E=A$. Moreover, all these conditions are unrelated in general, as shown by the following basic examples.

\begin{example}\label{d:ex2}
Let $A$ be a ring and $E$ a nonzero $A$-module.
\begin{enumerate}
\item Assume $A:=K$ is a field. Then, $E$ satisfies (DAC1). Moreover, $E$ satisfies (DAC2) if and only if $\dim_{K}(E)=1$. For,  the first statement is straightforward and the second holds as $\Ann_{E}(\Ann_{K}(e))=E$, for any nonzero $e\in E$.

\item Assume $(A,\m)$ is local and $E:=A/\m$. Then, $E$ satisfies (DAC2). Moreover, $E$ satisfies (DAC1) if and only if $l(\m)=1$. Indeed, the first statement is straight since $E$ has no nonzero proper submodules. The second statement holds since $\Ann_{A}(\Ann_{E}(x))=\m$, for any $x\in\m$.

\item Assume $A$ satisfies Matlis' double annihilator condition (e.g., semi-regular) and $E$ has a torsion-free element. Then, $E$ satisfies (DAC) if and only if $E\cong A$. This is true since  $\Ann_{E}(\Ann_{A}(e))=E$, for any given torsion-free element $e\in E$.
\end{enumerate}
\end{example}

We also need the next lemma which characterizes the double annihilator condition in a trivial ring extension via the (DAC) property of its divisible module.

\begin{lemma}\label{d:lem5}
Let $A$ be a domain, $E$ a divisible $A$-module, and $R:=A\ltimes~E$. Then, $R$ satisfies Matlis' double annihilator condition if and only if $E$ satisfies (DAC).
\end{lemma}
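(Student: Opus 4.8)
The plan is to reduce the equivalence to two annihilator computations inside $R$ and then to invoke Lemma~\ref{d:lem2}, which classifies the finitely generated ideals of $R$. First I would record, for every ideal $J$ of $A$ and every submodule $E'$ of $E$, the formulas
\[
\Ann_{R}(J\ltimes E)=0\ltimes\Ann_{E}(J)\quad\text{and}\quad\Ann_{R}(0\ltimes E')=\Ann_{A}(E')\ltimes E.
\]
Both are obtained by direct inspection of the product $(a,f)(x,e)=(ax,ae+fx)$. For the first, testing $(a,f)$ against the elements $(0,e)$ with $e\in E$ forces $a\in\Ann_{A}(E)$; here divisibility of the nonzero module $E$ is essential, since it yields $\Ann_{A}(E)=0$ and hence $a=0$, after which the defining condition collapses to $f\in\Ann_{E}(J)$. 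For the second, testing against $(0,e)$ with $e\in E'$ gives $a\in\Ann_{A}(E')$ while leaving $f$ free. I would emphasize that the first formula holds for every $J$, the zero ideal included.

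Composing the two formulas then computes the double annihilator of any finitely generated ideal of $R$. If $\mathcal{I}=I\ltimes E$ with $I$ a nonzero finitely generated ideal of $A$, then $\Ann_{R}(\Ann_{R}(\mathcal{I}))=\Ann_{A}(\Ann_{E}(I))\ltimes E$, so that Matlis' condition $\Ann_{R}(\Ann_{R}(\mathcal{I}))=\mathcal{I}$ becomes $\Ann_{A}(\Ann_{E}(I))=I$, that is, (DAC1). If $\mathcal{I}=0\ltimes E'$ with $E'$ a finitely generated submodule of $E$, then $\Ann_{R}(\Ann_{R}(\mathcal{I}))=0\ltimes\Ann_{E}(\Ann_{A}(E'))$, so Matlis' condition becomes $\Ann_{E}(\Ann_{A}(E'))=E'$, that is, (DAC2).

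Both implications now follow mechanically. For the forward direction I would assume $R$ satisfies Matlis' condition and run the ideals $I\ltimes E$ ($I$ a nonzero finitely generated ideal of $A$) and $0\ltimes E'$ ($E'$ a finitely generated submodule of $E$) through the preceding paragraph, obtaining (DAC1) for nonzero $I$ and (DAC2) for all $E'$; the remaining instance $I=0$ of (DAC1) reads $\Ann_{A}(E)=0$ and is again supplied by divisibility. For the converse, Lemma~\ref{d:lem2} guarantees that every finitely generated ideal of $R$ has one of the two shapes above, so (DAC1) and (DAC2) restore $\mathcal{I}$ from its double annihilator in each case, which is exactly Matlis' condition for $R$.

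The computations are routine; the main, and only delicate, point is the bookkeeping of the zero cases. Having the first annihilator formula available for $J=0$ is precisely what allows the subcase $\Ann_{A}(E')=0$ to be handled uniformly, so that the second computation genuinely produces (DAC2) rather than some weaker substitute. This uniformity rests entirely on the identity $\Ann_{A}(E)=0$, which divisibility of the nonzero module $E$ provides.
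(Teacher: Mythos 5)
Your proof is correct and follows essentially the same route as the paper's: the same two annihilator formulas $\Ann_{R}(I\ltimes E)=0\ltimes\Ann_{E}(I)$ and $\Ann_{R}(0\ltimes E')=\Ann_{A}(E')\ltimes E$, composed and matched against Matlis' condition via the classification of finitely generated ideals in Lemma~\ref{d:lem2}. Your explicit bookkeeping of the zero cases (the $J=0$ instance of the first formula and the $I=0$ instance of (DAC1), both supplied by $\Ann_{A}(E)=0$ from divisibility) is slightly more careful than the paper's, which disposes of this in one opening remark.
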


\begin{proof}
First, notice that $\Ann_{A}(\Ann_{E}(0))=\Ann_{A}(E)=0$, since $aE=E,\ \forall\ 0\neq a\in A$. Now, by Lemma~\ref{d:lem2}, the finitely generated ideals of $R$ have the forms $I\ltimes E$ and $0\ltimes E'$, where $I$ is a nonzero finitely generated ideal of $A$ and $E'$ is a finitely generated submodule of $E$. Moreover, one can easily check that
$$\Ann_{R}(I\ltimes E)=0\ltimes \Ann_{E}(I)$$
and
$$\Ann_{R}(0\ltimes E')=\Ann_{A}(E')\ltimes E.$$
It follows that
$$\Ann_{R}(\Ann_{R}(I\ltimes E))=\big(\Ann_{A}(\Ann_{E}(I))\big)\ltimes E$$
and
$$\Ann_{R}(\Ann_{R}(0\ltimes E'))=0\ltimes\big(\Ann_{E}(\Ann_{A}(E'))\big),$$
leading to the conclusion.
\end{proof}

Finally, we are ready to state the main theorem of this section on the transfer of semi-regularity to trivial ring extensions.

\begin{thm}\label{d:thm1}
Let $A$ be a domain and $E$ an $A$-module. Then, $R:=A\ltimes~E$ is semi-regular if and only if either $A$ is a field with $E\cong A$ or $A$ is a coherent domain, $E$ is a divisible (resp., fp-injective) torsion coherent module which satisfies (DAC), and $\Ann_{E}(x)$ is finitely generated for all $x\in A$.
\end{thm}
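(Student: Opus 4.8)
The plan is to reduce semi-regularity of $R$ to the conjunction of two properties for which the preceding lemmas already furnish module-theoretic translations, namely coherence and the double annihilator condition. The two external inputs are: first, that $R$ is semi-regular precisely when it is coherent and self fp-injective \cite[Theorem 3.10]{Jain} or \cite[Theorem 2]{Col}; and second, Matlis' observation that within the class of coherent rings, self fp-injectivity (equivalently, semi-regularity) is equivalent to the double annihilator condition $\Ann_R(\Ann_R(\mathcal I))=\mathcal I$ for every finitely generated ideal $\mathcal I$ of $R$ \cite[Section 4]{M85}. Granting these, each ingredient has already been decoded: coherence of $R$ by Proposition~\ref{d:lem3}, and the double annihilator condition by Lemma~\ref{d:lem5}. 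The case where $A$ is a field is degenerate (the requirement ``$E$ torsion'' would force $E=0$) and so must be disposed of first by separate means.

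For the field case $A:=k$, I would argue as follows. If $R=k\ltimes E$ is semi-regular then it is coherent, so by the combination of \cite[Theorem 2.6]{KM2} and \cite[Theorem 4.8]{AW} recalled above $\dim_{k}E<\infty$; hence $R$ is a finite-dimensional $k$-algebra and therefore Noetherian. A Noetherian semi-regular ring is quasi-Frobenius \cite[Proposition 3.4]{M85}, so the local ring $(R,0\ltimes E)$ is quasi-Frobenius and thus has simple socle. Since $\operatorname{soc}(R)=\Ann_{R}(0\ltimes E)=0\ltimes E$ has $k$-dimension equal to $\dim_{k}E$, simplicity of the socle forces $\dim_{k}E=1$, i.e.\ $E\cong A$. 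Conversely, $k\ltimes k\cong k[x]/(x^{2})$ is quasi-Frobenius, hence semi-regular.

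Now assume $A$ is a domain which is not a field. For the forward implication, semi-regularity of $R$ yields coherence of $R$ and, by Lemma~\ref{d:lem1}, fp-injectivity (a fortiori divisibility) of $E$; Proposition~\ref{d:lem3} then delivers coherence of $A$, torsion coherence of $E$, and finite generation of $\Ann_{E}(x)$ for all $x\in A$, while the Matlis characterization supplies the double annihilator condition for $R$, which Lemma~\ref{d:lem5} transfers to (DAC) for $E$. For the converse, the listed hypotheses feed Proposition~\ref{d:lem3} to give coherence of $R$ and Lemma~\ref{d:lem5} to give the double annihilator condition for $R$, whereupon the Matlis characterization returns semi-regularity. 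The parenthetical ``(resp., fp-injective)'' is covered at no extra cost: the forward direction already produces the stronger fp-injectivity through Lemma~\ref{d:lem1}, whereas the converse needs only divisibility, and since fp-injective modules are divisible, both readings of the statement hold simultaneously.

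The main obstacle is the bridge between self fp-injectivity and the double annihilator condition, i.e.\ pinning down the precise form of Matlis' equivalence for coherent rings and checking that the finite-generation hypotheses emerging from Proposition~\ref{d:lem3} are exactly what makes $\Ann_{R}(\Ann_{R}(-))$ well-behaved on the finitely generated ideals of $R$ described in Lemma~\ref{d:lem2}; once this is secured, the remainder is bookkeeping with that ideal structure. A secondary subtlety, already flagged above, is that the field case cannot be absorbed into the domain case and requires the quasi-Frobenius/socle argument.
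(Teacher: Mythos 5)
Your proposal is correct, and for the main case ($A$ a domain that is not a field) it follows exactly the paper's route: Lemma~\ref{d:lem1} $+$ Proposition~\ref{d:lem3} $+$ Lemma~\ref{d:lem5}, glued together by Matlis' equivalence ``semi-regular $\Leftrightarrow$ coherent and double annihilator condition'' \cite[Proposition 4.1]{M85}; your two external inputs (Jain/Colby's ``semi-regular $=$ coherent $+$ self fp-injective'' plus Matlis' Section 4 observation) recombine into precisely that statement, which the paper cites directly, so there is no real bridge left to build there. The only genuine divergence is the field case. The paper stays inside the (DAC) framework: every vector space over $k$ is divisible, so Lemma~\ref{d:lem5} applies, and Example~\ref{d:ex2}(1) shows $E$ satisfies (DAC) if and only if $\dim_{k}E=1$, whence semi-regularity of $k\ltimes E$ is again read off from Matlis' criterion. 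You instead route through quasi-Frobenius theory: coherence forces $\dim_{k}E<\infty$, hence $R$ is Noetherian, then \cite[Proposition 3.4]{M85} makes $R$ quasi-Frobenius, and simplicity of the socle $\operatorname{Ann}_{R}(0\ltimes E)=0\ltimes E$ of the local ring $(R,0\ltimes E)$ forces $\dim_{k}E=1$, with $k[x]/(x^{2})$ handling the converse. Both arguments are valid; the paper's is more economical in that it reuses the same lemmas as the general case, while yours is a self-contained classical argument at the cost of importing the QF/socle facts. One small caveat: your socle identity $\operatorname{Ann}_{R}(0\ltimes E)=0\ltimes E$ requires $E\neq 0$ (when $E=0$ one gets $\operatorname{Ann}_{R}(0)=R$), but the paper's own proof makes the same tacit assumption, and the degenerate case $E=0$ with $A$ a field is really a defect of the theorem's statement ($R=k$ is semi-regular yet $E\not\cong A$) rather than of either proof.
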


\begin{proof}
Let us first isolate the simple case when $A$ is trivial. Namely, let  $A:=k$ be a field and $E$ a nonzero $k$-vector space. Then we have, via Example~\ref{d:ex2}(1), that $\dim_{k}E=1$ if and only if $k\ltimes~E$ satisfies (DAC) if and only if $k\ltimes~E$ is semi-regular. Now, assume that $A$ is a domain which is not a field. Combine Lemma~\ref{d:lem1}, Proposition~\ref{d:lem3}, and Lemma~\ref{d:lem5} with Matlis' result  that ``\emph{a ring is semi-regular if and only if it is coherent and satisfies the double annihilator condition (on finitely generated ideals)}" \cite[Proposition 4.1]{M85}.
\end{proof}

 At this point, recall that a nonzero fractional ideal $I$ of a domain $A$ is divisorial if $I=I_{v}:=(I^{-1})^{-1}$. A domain is called divisorial if all its nonzero (fractional) ideals are divisorial. Divisorial domains have been studied by, among others, Bass \cite{Bass} and Matlis \cite{M68} for the Noetherian case, Heinzer \cite{Hein} for the integrally closed case, Bastida-Gilmer \cite{BaGi} for the transfer to $D+M$ constructions, and Bazzoni \cite{Baz} for more general settings. It is worthwhile recalling that a domain in which all finitely generated ideals are divisorial is not necessarily divisorial
 \cite[Example 2.11]{Baz}. Finally, recall that a domain $A$ is totally divisorial if every overring of $A$ is a divisorial domain; and $A$ is stable if every nonzero ideal of $A$ is projective over its ring of endomorphisms \cite{FuSa,Olb2}. A domain $A$ is totally divisorial if and only if $A$ is a stable divisorial domain \cite[Theorem 3.12]{Olb2}.

As an application of Theorem~\ref{d:thm1}, the next corollary will allow us to enrich the literature with new families of examples subject to semi-regularity. In the sequel, if $I$ and $J$ are (fractional) ideals of a domain $A$, let
$$(I:J)=\big\{x\in Q(A) \mid xJ\subseteq I\big\},$$
$$(I:_{A}J)=\big\{a\in A \mid aJ\subseteq I\big\}.$$

\begin{corollary}\label{d:cor1}
Let $A$ be a coherent domain which is not a field and $I$ a nonzero finitely generated fractional ideal of $A$. Then:
\begin{enumerate}[\upshape (1)]
\item $A\ltimes~\frac{Q(A)}{I}$ is semi-regular if and only if $(I:(I:J))=J$ for each nonzero finitely generated (fractional) ideal $J$ of $A$.
\item In particular, $A\ltimes~\frac{Q(A)}{A}$ is semi-regular if and only if each nonzero finitely generated (fractional) ideal of $A$ is divisorial.
\end {enumerate}
\end{corollary}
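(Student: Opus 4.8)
The plan is to apply Theorem~\ref{d:thm1} to $E:=Q(A)/I$, after observing that every hypothesis of that theorem except the double annihilator condition is automatic here, and then to translate (DAC) for $E$ into the displayed colon identity.

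First I would dispatch the easy module-theoretic facts. The field $Q(A)$ is divisible and divisibility passes to quotients, so $E=Q(A)/I$ is divisible; clearing denominators against a nonzero element of $I$ shows that $E$ is torsion. Choosing a nonzero $d\in A$ with $J_{0}:=dI\subseteq A$, multiplication by $d$ yields an isomorphism $Q(A)/I\cong Q(A)/J_{0}$; combining the exact sequence $0\to A/J_{0}\to Q(A)/J_{0}\to Q(A)/A\to0$ with the coherence of $A$ (so $A/J_{0}$ is finitely presented), the coherence of $Q(A)/A$ (as used in Example~\ref{d:ex1}), and the closure of coherent modules under extensions, one sees that $E$ is torsion coherent. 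Finally, for $0\neq x\in A$ one computes $\Ann_{E}(x)=x^{-1}I/I$, a finitely generated module. Hence Theorem~\ref{d:thm1} reduces the statement to: $R$ is semi-regular if and only if $E=Q(A)/I$ satisfies (DAC).

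The heart of the proof is then the equivalence ``$E=Q(A)/I$ satisfies (DAC)'' $\iff$ ``$(I:(I:J))=J$ for every nonzero finitely generated fractional ideal $J$ of $A$''. Using the isomorphism above I would first reduce to the case $I\subseteq A$, which is harmless since the displayed condition is unaffected by replacing $I$ with $dI$ (one checks $(dI:(dI:J))=(I:(I:J))$). With $I$ integral, Lemma~\ref{d:lem2} and the correspondence $N\leftrightarrow N/I$ show that the finitely generated submodules of $E$ are exactly the $N/I$ with $N\supseteq I$ a finitely generated fractional ideal; coherence of $A$ keeps all the colon ideals finitely generated. Computing $\Ann_{E}(J)=(I:J)/I$ and $\Ann_{A}(N/I)=(I:_{A}N)$, one finds that (DAC1) reads $(I:_{A}(I:J))=J$ for every finitely generated ideal $J$ of $A$, while (DAC2) reads $(I:(I:_{A}N))=N$ for every finitely generated fractional ideal $N\supseteq I$.

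The main obstacle is that both (DAC1) and (DAC2) feature the \emph{integral} colon $(I:_{A}-)=(I:-)\cap A$, whereas the asserted condition uses the full colon $(I:-)$ computed in $Q(A)$; the whole argument hinges on forcing these to agree. The key device is the trivial identity $(I:A)=I$. In the direction starting from the colon condition, specializing it to $J=A$ gives $(I:I)=(I:(I:A))=A$; in the direction starting from (DAC), the inclusion $(I:_{A}N)\subseteq(I:N)$ together with (DAC2) yields $(I:(I:N))=N$ for all $N\supseteq I$, and the case $N=A$ again gives $(I:I)=A$. Once $(I:I)=A$ is available, for $N\supseteq I$ one has $(I:N)\subseteq(I:I)=A$, so $(I:_{A}N)=(I:N)$ and (DAC2) becomes $(I:(I:N))=N$; likewise, for an integral ideal $J$ the inclusion $I\subseteq(I:J)$ forces $(I:(I:J))\subseteq(I:I)=A$, so (DAC1) becomes $(I:(I:J))=J$. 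A final scaling step, using $(I:(I:s^{-1}J_{0}))=s^{-1}(I:(I:J_{0}))$, promotes the identity from integral ideals to all nonzero finitely generated fractional ideals $J=s^{-1}J_{0}$, closing both implications.

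With the equivalence in hand, part~(1) is immediate. For part~(2) I would specialize to $I=A$: there $(A:(A:J))=J_{v}$ is the usual $v$-closure, so the condition $(A:(A:J))=J$ holds for every nonzero finitely generated (fractional) ideal $J$ precisely when every such ideal is divisorial, which is the assertion.
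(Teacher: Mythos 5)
Your proposal is correct and follows essentially the same route as the paper: both reduce, via Theorem~\ref{d:thm1}, to showing that $Q(A)/I$ satisfies (DAC) precisely when $(I:(I:J))=J$ for all nonzero finitely generated fractional ideals $J$, and in both arguments the pivot is the identity $(I:I)=A$, obtained by specializing to $J=A$ (resp.\ $N=A$), after which the integral colon $(I:_{A}-)$ and the full colon $(I:-)$ coincide on the relevant ideals. The only divergences are minor: you verify coherence of $Q(A)/I$ through the extension $0\to A/J_{0}\to Q(A)/J_{0}\to Q(A)/A\to 0$ and closure of coherent modules under extensions, whereas the paper uses coherence of the torsion-free module $Q(A)$ together with the two-out-of-three property for finite presentation, and you spell out the integral-to-fractional scaling steps (replacing $I$ by $dI$, and promoting the colon identity from integral to fractional $J$) that the paper performs only in one direction or leaves implicit.
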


\begin{proof}
(1) First, notice that $Q(A)$ is a coherent $A$-module since it is torsion-free \cite[IV-2, Lemma 2.5]{FuSa}.  Further, given any exact sequence of modules over a coherent ring $0\rightarrow M'\rightarrow M\rightarrow M"\rightarrow  0$, if any two of the modules $M'$, $M$, $M"$ are finitely presented, then so is the third \cite[IV-2, Exercise 2.5]{FuSa}. It follows that $E:=\frac{Q(A)}{I}$ is coherent, with $I$ regarded as a finitely generated submodule of $Q(A)$. Moreover, $E$ is clearly a divisible torsion module and $\Ann_{E}(x)=\overline{\frac{1}{x}I}$, for any nonzero $x\in A$. Therefore, by Theorem~\ref{d:thm1},
$A\ltimes~E$ is semi-regular if and only if $E$ satisfies (DAC). So, we just need to prove the following claim.
\medskip

{\bf Claim:} $\frac{Q(A)}{I}$ satisfies (DAC) if and only if $(I:(I:J))=J$ for each nonzero finitely generated (fractional) ideal $J$ of $A$.
\medskip

Indeed,  assume $(I:(I:J))=J$ for each nonzero finitely generated (fractional) ideal $J$ of $A$. Note first that for $J:=A$, we get $$A=(I:(I:A))=(I:I).$$ Next, let $\overline{J}$ be a nonzero finitely generated submodule of $E$; that is, $J$ is a nonzero finitely generated fractional ideal of $A$ containing $I$. Then $(I:J)\subseteq (I:I)=A$ and hence
$$\Ann_{A}(\overline{J})=A\cap(I:J)=(I:_{A}J)=(I: J).$$
Moreover, let $K$  be a nonzero finitely generated ideal of $A$. Then
$$\Ann_{E}(K)=\overline{(I:K)}.$$
Therefore, since $KI\subseteq I$, we obtain
$$\Ann_{A}(\Ann_{E}(K))=\Ann_{A}\left(\overline{(I:K)}\right)= (I:(I: K))=K$$
and
$$\Ann_{E}(\Ann_{A}(\overline{J}))=\overline{(I:(I:_{A} J))}=\overline{(I:(I:J))}=\overline{J}$$
proving the ``if" assertion.

Conversely, assume that $E$ satisfies (DAC) and let $0\neq a\in A$ such that $aI\subseteq A$. Since $\frac{Q(A)}{aI}\cong \frac{Q(A)}{I}$ as $A$-modules and $(aI:(aI:J))=(I:(I:J))$ for each $J$, we may assume without loss of generality that $I$ is an (integral) ideal of $A$. Then (DAC2), applied to $J:=A$, yields $$\overline{A}=\Ann_{E}(\Ann_{A}(\overline{A}))=\overline{(I:(I:_{A} A))}=\overline{(I:I)}$$
so that $A=(I:I)$. Now, let $J$ a be a nonzero finitely generated ideal of $A$. Then, via the basic fact $I\subseteq (I:J)$, (DAC1) yields
$$J=\Ann_{A}(\Ann_{E}(J))=\Ann_{A}\left(\overline{(I:J)}\right)= (I:_{A}(I: J))= (I:(I: J))$$
completing the proof of (1).

(2) Straightforward via (1) with $I:=A$ and the fact $(A:(A:J))=J_{v}$.

\end{proof}

The above proof revealed that ``$A\ltimes~\frac{Q(A)}{I}$ is semi-regular if and only if $\frac{Q(A)}{I}$ satisfies (DAC)." So, let $A$ be a coherent domain which is not a field and $I$ a nonzero finitely generated fractional ideal of $A$. By Lemma~\ref{d:lem1}, if $\frac{Q(A)}{I}$ satisfies (DAC), then it is fp-injective. We don't know if the converse holds in general.

A von Neumann regular ring is a reduced semi-regular ring \cite[Proposition 2.7]{M85}. Matlis noticed that ``(von Neumann) regular rings and quasi-Frobenius rings are seen to have a common denominator of definition--they are both extreme examples of semi-regular rings." Next, we provide various examples of semi-regular trivial ring extensions which are neither von Neumann regular (since not reduced) nor quasi-Frobenius (since not Noetherian).

\begin{example}\label{d:ex3}
Let $A$ be a coherent domain which is not a field and let $R:=A\ltimes~\frac{Q(A)}{A}$. Note that $R$ is not Noetherian since $\frac{Q(A)}{A}$ is not finitely generated.
\begin{enumerate}
\item Assume $A$ is integrally closed. Then:
\begin{center}
$R$ is semi-regular $\Longleftrightarrow$ $A$ is Pr\"ufer.
\end{center}
Indeed, combine Corollary~\ref{d:cor1} with the fact that every invertible ideal is divisorial and Krull's result that ``\emph{an integrally closed domain in which all nonzero finitely generated ideals are divisorial is Pr\"ufer}" (cf. \cite[Proof of Theorem 5.1]{Hein}). For an original example, take $A$ to be any non-trivial Pr\"ufer domain (e.g., $A:=\Z+X\Q[X]$).

\item If $A$ is a divisorial domain, then $R$ is semi-regular by Corollary~\ref{d:cor1}. For an original example, take $A$ to be any pseudo-valuation domain issued from a valuation domain $(V,M)$ with $M$ finitely generated and $[\frac{V}{M}:k]=2$. Then, $A$ is a (non-integrally closed)  divisorial domain
    \cite[Theorem 2.1 \& Corollary 4.4]{BaGi}, which is coherent \cite[Theorem 3]{DP} or
    \cite[Theorem 3]{BR}.

\item Next, we provide a non-integrally closed non-divisorial domain $A$ in which every finitely generated ideal is divisorial; and hence $R$ is semi-regular by Corollary~\ref{d:cor1}. Indeed, let $D$ be a non-integrally closed pseudo-valuation domain which is divisorial and coherent (e.g., take $D$ to be the domain $A$ of (2) above) and let $K$ be its quotient field. By \cite[Theorem 2.6]{Olb1}, $D$ is not stable and hence not totally divisorial by \cite[Theorem 3.12]{Olb2}. Let $V$ be a valuation domain of the form $K+M$ and let $A:=D+M$. Then, $A$ is a non-integrally closed non-divisorial domain \cite[Theorem 2.1 \& Corollary 4.4]{BaGi} which is coherent \cite[Theorem 3]{DP} or
    \cite[Theorem 3]{BR}. Moreover, since $D$ is divisorial, every finitely generated ideal of $A$ is divisorial by
    \cite[Theorem 2.1(k) \& Theorem 4.3]{BaGi}.
\end{enumerate}
\end{example}

Other original examples stem from Pr\"ufer domains via Corollary~\ref{d:cor1}. For instance, for any Pr\"ufer domain $A$ and non-zero finitely generated (fractional) ideal $I$ of $A$, the trivial ring extension $A\ltimes~\frac{Q(A)}{I}$ is semi-regular. Indeed, let $J$ be a non-zero finitely generated ideal of $A$. Then, the basic facts $(IJ^{-1})J\subseteq I$ and $J(I:J)\subseteq I$, yield $(I:J)=IJ^{-1}$. It follows that $(I:(I:J))=(I:IJ^{-1})=I(IJ^{-1})^{-1}=J_{v}=J$, as desired.

Observe that for an example of a module $E$ which is not of the form $Q(A)/I$, one may appeal to non-standard uniserial modules. From \cite[X-3]{FuSa}, a uniserial module over a valuation domain with quotient  field $Q$ is \emph{standard} if it is isomorphic to $J/I$ for some ideals $0 \subseteq I \subseteq J \subseteq Q$. A uniserial module is \emph{non-standard} if it is not isomorphic to such a quotient. In this vein, recall that torsion-free uniserial modules are necessarily standard. Next, by \cite[Example VII-4.1 \& Theorem X-4.5 \& following comment]{FuSa}, let $A$ be a valuation domain for which there exists a divisible non-standard uniserial module $E$ whose non-zero elements have principal annihilators. Then, the trivial ring extension $R:=A\ltimes E$ is a chained ring that is not a homomorphic image of a valuation domain \cite[Theorem X-6.4]{FuSa}. Moreover, by \cite[Theorem 10]{Cou2003}, $R$ is semi-regular; indeed, let $0\neq e$ be a nonzero torsion element of $E$ with $\Ann_{A}(e)=aA$, for some $0\neq a\in A$. Since $E$ is divisible, it is easily seen that $\Ann_{R}(0,e)=\Ann_{A}(e)\ltimes E = aA\ltimes E = (a,0)R$, as desired.

\subsection*{Acknowledgements}
This work was funded by NSTIP Research Award \# 14-MAT71-04. We thank Fran\c{c}ois Couchot for his comments which helped to improve the quality of this paper. We also thank the referee for a very careful reading of the manuscript and useful suggestions.


\end{document}